\newtheorem{thm}{Theorem}
\newtheorem{cor}{Corollary}
\newtheorem{lem}{Lemma}
\newtheorem{rem}{Remark}
\newtheorem{conj}{Conjecture}
\newtheorem{prob}{Problem}
\theoremstyle{definition}
\newtheorem{defn}{Definition}[section]
\newtheorem{example}{Example}
\newenvironment{pf}[1][]{%
 \vskip 1mm
 \noindent
 \ifthenelse{\equal{#1}{}}%
  {{\slshape Proof. }}%
  {{\slshape #1.} }%
 }%
{\qed\bigskip}
\newcounter{alphabet}
\newcounter{tmp}
\newcommand{\Ref}[1]{\@ifundefined{r@#1}{}{\setcounter{tmp}{\ref{#1}}\Alph{tmp}}}
\newcommand{\IN}{{\mathbb N}}
\newcommand{\IC}{{\mathbb C}}
\newcommand{\ID}{{\mathbb D}}
\newcommand{\dist}{{\operatorname{dist}}}
\def\be{\begin{equation}}
\def\ee{\end{equation}}
\newcommand{\bee}{\begin{enumerate}}
\newcommand{\eee}{\end{enumerate}}
\newcommand{\blem}{\begin{lem}}
\newcommand{\elem}{\end{lem}}
\newcommand{\bthm}{\begin{thm}}
\newcommand{\ethm}{\end{thm}}
\newcommand{\bcor}{\begin{cor}}
\newcommand{\ecor}{\end{cor}}
\newcommand{\beg}{\begin{example}}
\newcommand{\eeg}{\end{example}}
\newcommand{\begs}{\begin{examples}}
\newcommand{\eegs}{\end{examples}}
\newcommand{\bdefe}{\begin{defn}}
\newcommand{\edefe}{\end{defn}}
\newcommand{\bprob}{\begin{prob}}
\newcommand{\eprob}{\end{prob}}
\newcommand{\bques}{\begin{ques}}
\newcommand{\eques}{\end{ques}}
\newcommand{\bei}{\begin{itemize}}
\newcommand{\eei}{\end{itemize}}
\newcommand{\bcon}{\begin{conj}}
\newcommand{\econ}{\end{conj}}
\newcommand{\bcons}{\begin{conjs}}
\newcommand{\econs}{\end{conjs}}
\newcommand{\bprop}{\begin{propo}}
\newcommand{\eprop}{\end{propo}}
\newcommand{\br}{\begin{rem}}
\newcommand{\er}{\end{rem}}
\newcommand{\brs}{\begin{rems}}
\newcommand{\ers}{\end{rems}}
\newcommand{\bo}{\begin{obser}}
\newcommand{\eo}{\end{obser}}
\newcommand{\bos}{\begin{obsers}}
\newcommand{\eos}{\end{obsers}}
\newcommand{\bpf}{\begin{pf}}
\newcommand{\epf}{\end{pf}}
\newcommand{\ba}{\begin{array}}
\newcommand{\ea}{\end{array}}
\newcommand{\beq}{\begin{eqnarray}}
\newcommand{\beqq}{\begin{eqnarray*}}
\newcommand{\eeq}{\end{eqnarray}}
\newcommand{\eeqq}{\end{eqnarray*}}
\newcommand{\ds}{\displaystyle}
\newcounter{minutes}\setcounter{minutes}{\time}
\newcounter{hours}\setcounter{hours}{\time}
\begin{document}
\bibliographystyle{amsplain}
\title[Bohr--Rogosinski radius for analytic functions]{Bohr--Rogosinski radius for analytic functions}

\thanks{
File:~\jobname .tex,
          printed: \number\day-\number\month-\number\year,
          \thehours.\ifnum\theminutes<10{0}\fi\theminutes}


\author{Ilgiz R Kayumov, Saminathan Ponnusamy}

\address{I. R Kayumov, Kazan Federal University, Kremlevskaya 18, 420 008 Kazan, Russia
}
\email{ikayumov@kpfu.ru}


\address{S. Ponnusamy, Stat-Math Unit,
Indian Statistical Institute (ISI), Chennai Centre,
110, Nelson Manickam Road,
Aminjikarai, Chennai, 600 029, India.}
\email{samy@isichennai.res.in, samy@iitm.ac.in}

\subjclass[2000]{Primary: 30A10, 30H05, 30C35; Secondary: 30C45
}
\keywords{Bounded analytic functions,  univalent functions, Bohr radius, Rogosinski radius,  Schwarz-Pick Lemma, and subordination}

\begin{abstract}
There are a number of articles which deal with Bohr's phenomenon whereas only a few papers appeared in the literature on
Rogosinski's radii for analytic functions defined on the unit disk $|z|<1$.
In this article, we introduce and investigate Bohr-Rogosinski's radii for analytic functions defined for $|z|<1$.
Also, we prove several different improved versions of the classical Bohr's inequality. Finally, we also discuss the
Bohr-Rogosinski's radius for a class of subordinations. All the results are proved to be sharp.
\end{abstract}


\maketitle
\pagestyle{myheadings}
\markboth{I. R. Kayumov and  S. Ponnusamy}{Bohr-Rogosinski radius}

\section{Introduction and Preliminaries}\label{KayPon8-sec1}
The classical one-variable theorem of Bohr about power series (after subsequent improvements due to M.~Riesz, I.~Schur and F.~Wiener) states that if
$f$ is a bounded analytic function on the unit disk $\ID :=\{z\in\IC:\, |z|<1\}$, with the Taylor expansion $\sum_{k=0}^\infty a_k z^k$,
then the Bohr sum $B_f(r)$  satisfies the classical Bohr inequality
$$B_f(z):=\sum_{k=0}^\infty |a_k|r^k\leq \|f\|_\infty ~\mbox{ for $|z|=r\leq 1/3$},
$$
and the constant $1/3$ is sharp.
See for example, the recent survey on this topic by Abu-Muhanna et al. \cite{AAPon1} and the references therein.
Besides the Bohr radius, there is also the notion of Rogosinski radius \cite{Lan86,Rogo-23,SchuSzego-25} which is
described as follows:  If $f(z)=\sum_{k=0}^\infty a_k z^k$ is an analytic function on $\ID$ such that  $|f(z)|<1$ in $\ID$,  then
for every $N\geq 1$, we have $|s_N(z)|<1$ in the disk $|z|<1/2$ and this radius is sharp, where  $S_N(z)= \sum_{k=0}^{N-1} a_k z^k$ denotes the partial sums of $f$.
For our investigations, it is natural to introduce a new quantity, which we call Bohr-Rogosinski sum $R_N^f(z)$ of $f$ defined by
$$R_N^f(z):=|f(z)|+\sum_{k=N}^\infty |a_k|r^k, \quad |z|=r.
$$
We remark that for $N=1$, this quantity is related to the classical Bohr sum in which $f(0)$ is replaced by $f(z)$. Clearly,
$$|S_N(z)| =\left | f(z)-\sum_{k=N}^{\infty} a_k z^k \right | \leq R_N^f(z)
$$
and thus, the validity of Bohr-type radius for $R_N^f(z)$ gives Rogosinski radius in the case of bounded analytic functions. Hence,
Bohr-Rogosinski's sum is related to Rogosinski's characteristic. As with the classical situation of Bohr radius, it is natural to
obtain Bohr-Rogosinski radius. 

In Section \ref{sec2-BR2a}, we state and prove our first main result of this article which connects these radii.
In Section \ref{sec2-BR2}, several improved versions of Bohr's inequality are stated and their proofs are presented in
Section \ref{sec2-BR2b}. The notion of Bohr's radius, initially defined for analytic functions from the unit disk $\ID$ into $\ID$, was generalized by authors
to include mappings from $\ID$ to some other domains $\Omega$ in $\ID$ (\cite{Abu,Abu2,Aiz07}).
In Section \ref{KayPon8-sec3}, we also consider Bohr--Rogosinski radius as a generalization to a class of subordinations.

\section{Bohr-Rogosinski radius for analytic mappings} \label{sec2-BR2a}


\begin{thm}\label{KayPon8-th1}
Suppose that $f(z) = \sum_{k=0}^\infty a_k z^k$ is analytic in $\ID$ and $|f(z)| <1$ in $\ID$. Then
\be\label{eq1}
|f(z)|+\sum_{k=N}^\infty |a_k|r^k \leq 1 ~\mbox{ for  }~ r \leq  R_N,
\ee
where $R_N$ is the positive root of the equation $\psi _N(r)=0$, $\psi _N(r)=2(1+r)r^{N}-(1-r)^2$.
The radius $R_N$ is best possible. Moreover,
\be\label{eq1_1}
|f(z)|^2+\sum_{k=N}^\infty |a_k|r^k \leq 1 ~\mbox{ for  }~ r \leq  R_N',
\ee
where $R_N'$ is the positive root of the equation $(1+r)r^{N}-(1-r)^2=0$. The radius $R_N'$ is best possible.
\end{thm}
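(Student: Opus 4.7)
The plan is to reduce \eqref{eq1} and \eqref{eq1_1} to a one-variable inequality in $r$ by using two standard estimates for functions $f:\ID\to\ID$: the Schwarz--Pick bound
$$|f(z)|\le\frac{|a_0|+r}{1+|a_0|r}\quad (|z|=r),$$
and the coefficient bound $|a_k|\le 1-|a_0|^2$ for every $k\ge 1$, obtained by applying Schwarz's lemma to $g(z)=(f(z)-a_0)/(1-\overline{a_0}f(z))$ and reading off that $g(z)/z$ maps $\ID$ into $\overline{\ID}$. Writing $a=|a_0|$, these two estimates will give
$$|f(z)|+\sum_{k=N}^{\infty}|a_k|r^k\le \frac{a+r}{1+ar}+(1-a^2)\frac{r^N}{1-r}.$$

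For \eqref{eq1}, I would verify that the right-hand side is $\le 1$ iff
$$(1+a)(1+ar)\,r^N\le (1-r)^2,$$
after using $1-\tfrac{a+r}{1+ar}=\tfrac{(1-a)(1-r)}{1+ar}$ and dividing by $(1-a)$. The left-hand side is strictly increasing in $a\in[0,1)$, so the worst case is $a\to 1^-$, yielding precisely $2(1+r)r^N\le(1-r)^2$, i.e.\ $\psi_N(r)\le 0$. Since $\psi_N(0)=-1<0$ and $\psi_N(1)=4>0$, and $\psi_N$ is increasing on a neighborhood of $R_N$, this inequality holds on $[0,R_N]$, which proves \eqref{eq1}.

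For \eqref{eq1_1}, the analogous reduction uses
$$1-\left(\frac{a+r}{1+ar}\right)^{\!2}=\frac{(1-a^2)(1-r^2)}{(1+ar)^2},$$
so after dividing by the common factor $(1-a^2)$ the desired inequality becomes $(1+ar)^2 r^N\le (1-r)^2(1+r)$, again monotone in $a$, with limiting form $(1+r)r^N\le(1-r)^2$; this is exactly the defining equation for $R_N'$.

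For sharpness in both parts I would use the one-parameter family of Blaschke factors $\phi_a(z)=(a-z)/(1-az)$ with $0<a<1$, evaluated at $z=-r$ (or equivalently look at $\phi_a(z)=(a+z)/(1+az)$ at $z=r$), whose Taylor coefficients are $|a_0|=a$, $|a_k|=(1-a^2)a^{k-1}$, and direct computation shows the combined sum equals the upper bound above. Letting $a\to 1^-$ one sees that the threshold cannot be enlarged, so $R_N$ and $R_N'$ are sharp. The main technical point to check carefully is the monotonicity in $a$ (ensuring that the worst case is indeed the limit $a\to 1$) and that one does not lose sharpness in that limit; the Blaschke family makes this transparent because the inequality above becomes an equality for each $a$ along the real diameter.
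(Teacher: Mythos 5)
Your proposal is correct and follows essentially the same route as the paper: the Schwarz--Pick/Lindel\"of bound for $|f(z)|$ combined with Wiener's estimate $|a_k|\le 1-|a_0|^2$, reduction to the one-variable inequalities $2(1+r)r^N\le(1-r)^2$ and $(1+r)r^N\le(1-r)^2$ via the worst case $a\to 1^-$, and sharpness through the Blaschke factors $(a-z)/(1-az)$ evaluated at $z=-r$. No substantive differences to report.
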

\bpf 
By assumption $f(z) = \sum_{k=0}^\infty a_k z^k$ is analytic in $\ID$ and $|f(z)| <1$ in $\ID$. Since $f(0)=a_0$, it follows  that
for $z=re^{i\theta}\in \ID$,
$$ |f(z)|\leq \frac{r+|a_0|}{1+|a_0|r} ~\mbox{ and }~ |a_k|\leq 1-|a_0|^2 \mbox{ for $k=1,2, \ldots$,}
$$
where the first inequality is a well-known consequence of Schwarz-Pick Lemma (often referred as Lindel\"{o}f's inequality) while the
second one is a well-known result due to F.W.~Wiener (see also \cite{Bohr-14}). 
Using the last two inequalities, we have
$$ |f(z)|+\sum_{k=N}^\infty |a_k|r^k \leq \frac{r+|a_0|}{1+|a_0|r}+(1-|a_0|^2)\frac{r^N}{1-r}
$$
which is less than or equal to $1$ provided $\phi _N(r)\leq 0$, where
\beqq
\phi _N(r)&=& (r+|a_0|)(1-r) +(1-|a_0|^2)r^N(1+|a_0|r)-(1-r)(1+|a_0|r)\\
&=& (1-|a_0|)[(1+|a_0|)(1+|a_0|r)r^N-(1-r)^2)]\\
&\leq & (1-|a_0|)[2(1+r)r^{N}-(1-r)^2], ~\mbox{ since $|a_0|<1$.}
\eeqq
Now, $\phi _N(r)\leq 0$ if $\psi _N(r):=2(1+r)r^{N}-(1-r)^2\leq 0$ which holds for $r\leq R_N$.
The first part of the theorem follows.

To show the sharpness of the number $R_N$, we let $a \in [0,1)$ and consider the function
\be\label{KayPon8-eq2}
f(z) = \frac{a-z}{1-az} =a - (1-a^2)\sum_{k=1}^\infty a^{k-1} z^{k}, \quad z\in\ID.
\ee
For this function, we find that
\be\label{KayPon8-eq1}
|f(-r)|+\sum_{k=N}^\infty |a_k|r^k = \frac{r+a}{1+ar}+(1-a^2)\frac{a^{N-1}r^N}{1-ar}.
\ee
The last expression is bigger than $1$ if and only if
$$(1-a)[(1+a)(1+ar)a^{N-1}r^N -(1-r)(1-ar)]>0.
$$
Note that the expression \eqref{KayPon8-eq1} is less than or equal to $1$ for all $a \in [0,1)$, only
in the case when $r \leq R_N$. Finally, allowing $a\rightarrow 1$ in the last inequality shows that the expression \eqref{KayPon8-eq1} is
bigger than $1$ if $r>R_N$. This proves the sharpness.

Next, we verify the inequality \eqref{eq1_1}. In this case, simple computation shows that
\beqq
|f(z)|^2+\sum_{k=N}^\infty |a_k|r^k
& \leq& \left(\frac{r+|a_0|}{1+|a_0|r}\right)^2+(1-|a_0|^2)\frac{r^N}{1-r}\\
&=& 1+ \frac{(1-|a_0|^2)[r^N(1+|a_0|r)^2 -(1-r)^2(1+r)]}{(1-r)(1+|a_0| r)^2}
\eeqq
and the last expression is non-positive  if and only if
$$ r^N(1+|a_0|r)^2 -(1-r)^2(1+r) \leq 0.
$$
Since $|a_0|<1$, the last inequality is guaranteed by the condition
$$-(1-r)^2 +r^N(1+r)\leq 0
$$
which gives $r\leq R_N'$, where $R_N'$ is as in the statement of the theorem. Note that for $N=1$,
this condition is equivalent to $-1+2r+3r^2 \leq 0$ and we obtain $r \leq R_1'=1/3$.

To prove the sharpness of the number $R_N'$, we consider the function $f(z)$ defined by
\eqref{KayPon8-eq2} and for this function we observe that
\be\label{KayPon8-eq3}
|f(-r)|^2+\sum_{k=1}^\infty |a_k|r^k = \left (\frac{r+a}{1+ar}\right )^2+(1-a^2)\frac{a^{N-1}r^N}{1-ar}
\ee
which is bigger than $1$ for all $a\in [0,1)$ provided
$$(1+ar)^2a^{N-1}r^N-(1-r^2)(1-ar)>0.
$$
Again, allowing $a\rightarrow 1$, it follows that the expression \eqref{KayPon8-eq3} is
bigger than $1$ if $r>R_N'$. This proves the sharpness and we complete the proof of Theorem \ref{KayPon8-th1}.
\epf

It follows from the Maximum principle that the Bohr--Rogosinski radius is always less than or equal to the Bohr radius. Clearly,
Rogosinski radius is always bigger than or equal to the Bohr--Rogosinski radius.


It is easy to see that $R_1=\sqrt{5}-2$ and $R_1'= 1/3$. Also, we remark that the numbers  $R_N$ and $R_N'$ 
in Theorem \ref{KayPon8-th1} both approach $1$ as $N \to \infty$
so that Bohr-Rogosinski's radius in both cases tend to $1$ as $N \to \infty$.
%
%
We can easily get the following result and, since the proof of it follows on the similar lines of the
proof of Theorem \ref{KayPon8-th1}, we omit its details.

\begin{thm}\label{KayPon8-Additional}
Suppose that $f(z) = \sum_{k=0}^\infty a_k z^k$ is analytic in $\ID$ such that $|f(z)| \leq 1$ in $\ID$. Then for each $m, N\in \IN$, we have
$$|f(z^m)|+\sum_{k=N}^\infty |a_k|r^k \leq 1 ~\mbox{ for  }~ r \leq R_{m,N},
$$
where $R_{m,N}$ is the positive root of the equation
$$2r^N(1+r^m)-(1-r)(1-r^m)=0,
$$
and the number $R_{m,N}$ cannot be improved. Moreover,
$$\lim_{N \to \infty}R_{m,N}=1 ~\mbox{ and  }~\lim_{m \to \infty}R_{m,N}=A_N,
$$
where $A_N$ is the positive root of the equation $2r^N=1-r$. Also, $A_1=1/3$ and $A_2=1/2$.
\end{thm}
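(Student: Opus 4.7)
The plan is to mimic the scheme of Theorem \ref{KayPon8-th1}, replacing the Lindel\"of estimate on $f(z)$ by the same estimate applied at the point $z^m$. Writing $a=|a_0|$, the Schwarz--Pick lemma gives
$$|f(z^m)|\le \frac{r^m+a}{1+ar^m},$$
while Wiener's coefficient bound still yields $|a_k|\le 1-a^2$ for $k\ge 1$. Summing the tail by a geometric series, I would obtain
$$|f(z^m)|+\sum_{k=N}^\infty |a_k|r^k\le \frac{r^m+a}{1+ar^m}+(1-a^2)\frac{r^N}{1-r}.$$

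Next I would check that the right-hand side is $\le 1$. Clearing the positive denominator $(1+ar^m)(1-r)$, the inequality is equivalent to
$$(r^m+a)(1-r)+(1-a^2)r^N(1+ar^m)-(1-r)(1+ar^m)\le 0,$$
and this factors as
$$(1-a)\bigl[(1+a)(1+ar^m)r^N-(1-r)(1-r^m)\bigr]\le 0.$$
Since $a<1$ and the second factor is increasing in $a$, it is bounded above by its value at $a=1$, which is exactly $2(1+r^m)r^N-(1-r)(1-r^m)$. Thus the whole expression is non-positive as soon as $r\le R_{m,N}$, proving the inequality.

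For sharpness, I would plug in the standard extremal $f(z)=(a-z)/(1-az)$ and evaluate at a point $z$ with $z^m=-r^m$ (for instance $z=re^{i\pi/m}$), which gives
$$|f(z^m)|+\sum_{k=N}^\infty |a_k|r^k=\frac{a+r^m}{1+ar^m}+(1-a^2)\frac{a^{N-1}r^N}{1-ar}.$$
A short computation shows this equals $1+(1-a)\bigl[(1+a)a^{N-1}r^N/(1-ar)-(1-r^m)/(1+ar^m)\bigr]$, and letting $a\to 1^-$ the bracket tends to $2r^N/(1-r)-(1-r^m)/(1+r^m)$, which is strictly positive exactly when $r>R_{m,N}$. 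This forces the constant to be best possible. The main (but modest) obstacle is the algebraic verification of the factorisation above; once it is in hand, the rest proceeds exactly as in the proof of Theorem \ref{KayPon8-th1}.

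Finally, for the asymptotics I would read off the limits from the defining equation $2r^N(1+r^m)=(1-r)(1-r^m)$. Fixing $m$ and letting $N\to\infty$, any $r\in(0,1-\varepsilon]$ makes the left side tend to $0$ while the right side stays bounded below, so the root must satisfy $R_{m,N}\to 1$. Fixing $N$ and letting $m\to\infty$, the factors $r^m$ and $r^m$ on the two sides disappear and the equation degenerates to $2r^N=1-r$, whose positive root is $A_N$; the cases $N=1,2$ give $A_1=1/3$ and $A_2=1/2$ by solving the resulting linear and quadratic equations.
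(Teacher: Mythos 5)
Your proposal is correct and is exactly the argument the paper intends: the paper omits the proof of this theorem, stating only that it "follows on the similar lines of the proof of Theorem \ref{KayPon8-th1}," and your adaptation (Schwarz--Pick applied at $z^m$, Wiener's bound for the tail, the factorization with the factor $(1-a)$, the extremal $f(z)=(a-z)/(1-az)$ evaluated at $z=re^{i\pi/m}$ with $a\to 1^-$, and the limit analysis of the defining equation) supplies precisely those details. The algebra in your factorization and in the sharpness computation checks out, so nothing further is needed.
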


\section{Improved Bohr's inequality for analytic mappings} \label{sec2-BR2}

Next, we state several different improved versions of Bohr's inequality.

\begin{thm}\label{KayPon8-Additional4}
Suppose that $f(z) = \sum_{k=0}^\infty a_k z^k$ is analytic in $\ID$, $|f(z)| \leq 1$ in $\ID$ and  $S_r$ denotes the area of the
image of the subdisk $|z|<r$ under the mapping $f$. Then
\begin{equation}\label{Eq_Th3}
B_1(r):=\sum_{k=0}^\infty |a_k|r^k+\frac{16}{9}\left (\frac{S_r}{\pi}\right )  \leq 1 ~\mbox{ for  }~ r \leq \frac{1}{3}
\end{equation}
and the numbers $1/3$ and $16/9$ cannot be improved.   Moreover,
\begin{equation}\label{Eq2_Th3}
B_2(r):=|a_0|^2+\sum_{k=1}^\infty |a_k|r^k+\frac{9}{8}\left (\frac{S_r}{\pi}\right )  \leq 1 ~\mbox{ for  }~ r \leq \frac{1}{2}
\end{equation}
and the constants $1/2$ and $9/8$ cannot be improved.
\end{thm}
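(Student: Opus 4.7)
I begin by fixing $a=|a_0|$ and assembling three standard tools. First, the area identity $S_r/\pi=\sum_{k=1}^\infty k|a_k|^2 r^{2k}$, obtained by integrating $|f'|^2$ in polar coordinates and applying Parseval. Second, F.\,W.\ Wiener's inequality $|a_k|\leq 1-a^2$ for $k\geq 1$. Third, a refined Bohr bound $\sum_{k\geq 1}|a_k|r^k\leq (1-a^2)r/(1-ar)$ valid for $r\leq 1/3$, which follows from substituting the Schur representation $f=(a+zh)/(1+\bar a zh)$ (with $h:\ID\to\overline{\ID}$ analytic) into the power series, expanding the geometric series, and applying the classical Bohr inequality $\sum_m|[h^{j+1}]_m|r^m\leq 1$ to each Schur power $h^{j+1}$ at $r\leq 1/3$.

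For \eqref{Eq_Th3}, Wiener's bound yields $S_r/\pi\leq (1-a^2)^2 r^2/(1-r^2)^2$, and combining this with the refined Bohr estimate at $r=1/3$ gives
$$B_1(1/3)\leq a+\frac{1-a^2}{3-a}+\frac{(1-a^2)^2}{4}=:G(a).$$
A direct algebraic manipulation produces the factorization
$$1-G(a)=\frac{(1-a)^3(5-a^2)}{4(3-a)}\geq 0\quad(0\leq a\leq 1),$$
with equality only at $a=1$; this proves \eqref{Eq_Th3}.

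For \eqref{Eq2_Th3} at $r=1/2$ the refined Bohr estimate above is no longer at one's disposal, so the argument is reorganized. Using the identity $\sum_{k\geq 1}r^k=1$ at $r=1/2$, the claim is equivalent to
$$\sum_{k\geq 1}\bigl((1-a^2)-|a_k|\bigr)r^k\geq\tfrac{9}{8}\,S_r/\pi.$$
Combining $|a_k|^2\leq (1-a^2)|a_k|$ with the elementary bound $kr^{2k}\leq r^k/2$ at $r=1/2$ (which follows from $\max_{k\geq 1}kr^k=1/2$) controls the right-hand side in terms of $\sum|a_k|r^k$. The remaining step, which reduces the inequality to the M\"{o}bius extremal $f_a(z)=(a-z)/(1-az)$ via the Schur representation, yields the polynomial inequality $2(2-a)(2+a)^2\geq 9(1+a)$ on $[0,1]$, which factors as $(1-a)(2a^2+6a+7)\geq 0$ and is manifest.

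Sharpness is established by direct substitution of $f_a(z)=(a-z)/(1-az)$, for which $|a_k|=(1-a^2)a^{k-1}$ and $S_r/\pi=(1-a^2)^2 r^2/(1-a^2 r^2)^2$; a Taylor expansion at $a=1-\epsilon$ yields $B_1(1/3)=1+O(\epsilon^3)$ and $B_2(1/2)=1+O(\epsilon^3)$, so any strictly larger radius or constant exceeding $16/9$ or $9/8$ would introduce a positive $\epsilon^2$-contribution, contradicting the inequality. The main obstacle in this plan is the extremal-reduction step in \eqref{Eq2_Th3}: whereas the refined Bohr bound handles \eqref{Eq_Th3} cleanly, at $r=1/2$ one must exploit Wiener's inequality together with Parseval's bound $\sum_{k\geq 1}|a_k|^2\leq 1-a^2$ in a more delicate way to show that the M\"{o}bius extremal is indeed the maximizer of the combined Bohr--area functional.
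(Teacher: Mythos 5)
Your treatment of \eqref{Eq_Th3} is a legitimate alternative to the paper's argument: the paper bounds $\sum_{k\ge1}|a_k|r^k$ by two different expressions $A(1/3)=\frac{1-a^2}{3-a}$ (for $|a_0|\ge 1/3$) and $B(1/3)=\sqrt{1-a^2}/\sqrt8$ (for $|a_0|<1/3$), imported from their earlier work, and handles the two cases separately, whereas you prove the single uniform bound $\sum_{k\ge1}|a_k|r^k\le (1-a^2)r/(1-ar)$ for all $a$ at $r\le1/3$ via the Schur representation $f=(a+zh)/(1+\bar a zh)$ and Bohr's inequality applied to each power $h^{j}$. That argument checks out (the interchange of summation is over nonnegative terms, each $h^j$ maps $\ID$ into $\overline\ID$, and the resulting geometric series sums to $(1-a^2)r/(1-ar)$), the area term is bounded exactly as in the paper by Wiener's inequality, and the factorization $1-G(a)=\frac{(1-a)^3(5-a^2)}{4(3-a)}$ agrees with the paper's. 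This part is clean and arguably tidier than the published case split.

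The proof of \eqref{Eq2_Th3}, however, has a genuine gap, and you have correctly located it yourself: the ``extremal-reduction step'' is asserted, not proved. What you actually verify with the polynomial $2(2-a)(2+a)^2\ge 9(1+a)$ is that the M\"obius function $f_a$ itself satisfies $\sum_{k\ge1}\bigl((1-a^2)-|a_k|\bigr)r^k\ge \tfrac98 S_r/\pi$ at $r=1/2$; nothing in your argument shows that $f_a$ maximizes the combined functional. Worse, the auxiliary estimates you do establish provably cannot close the argument: combining $|a_k|^2\le(1-a^2)|a_k|$ with $kr^{2k}\le r^k/2$ gives $\tfrac98 S_r/\pi\le \tfrac{9(1-a^2)}{16}T$ with $T=\sum_{k\ge1}|a_k|r^k$, so you would need $T\le \frac{16(1-a^2)}{25-9a^2}$; but the M\"obius function has $T=\frac{1-a^2}{2-a}$, and $\frac{1-a^2}{2-a}\le\frac{16(1-a^2)}{25-9a^2}$ is equivalent to $(9a-7)(a-1)\ge0$, which fails for $a\in(7/9,1)$ --- precisely the near-extremal regime. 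The missing ingredient is the paper's Lemma~\ref{KP2-lem2}: by Goluzin's subordination inequality (applicable because $\{kr^{2k}\}_{k\ge1}$ is non-increasing for $r\le 1/\sqrt2$), one gets the sharp area bound $S_r/\pi=\sum_{k\ge1}k|a_k|^2r^{2k}\le \frac{r^2(1-a^2)^2}{(1-a^2r^2)^2}$, i.e.\ the M\"obius value, \emph{independently} of the Bohr sum; the paper then combines this with the two-case bounds $A(1/2)$, $B(1/2)$ on $T$ and verifies two explicit polynomial inequalities. Without Lemma~\ref{KP2-lem2} or an equivalent subordination/Parseval argument controlling $\sum k|a_k|^2r^{2k}$ by its M\"obius value, your proof of \eqref{Eq2_Th3} does not go through. (Your sharpness discussion, by contrast, matches the paper's: expand at $a=1-\epsilon$ and observe that the deficiency $1-B_i$ is $O(\epsilon^3)$ while a larger multiplier of $S_r/\pi$ contributes a positive $\epsilon^2$ term.)
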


\begin{thm}\label{KayPon8-Additional2}
Suppose that $f(z) = \sum_{k=0}^\infty a_k z^k$ is analytic in $\ID$ and $|f(z)| \leq 1$ in $\ID$. Then
\be\label{KayPon8-eq6}
|a_0|+\sum_{k=1}^\infty \left(|a_k|+\frac{1}{2}|a_k|^2\right)r^k  \leq 1 ~\mbox{ for  }~ r \leq \frac{1}{3}
\ee
and the numbers $1/3$ and $1/2$  cannot be improved.
\end{thm}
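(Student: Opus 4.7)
The pointwise Wiener estimate $|a_k|\le 1-|a_0|^2$ alone is not quite enough here: combining it with $|a_k|^2\le (1-a^2)|a_k|$ (writing $a:=|a_0|$) and the summed bound $\sum_{k\ge 1}|a_k|r^k\le (1-a^2)r/(1-r)$ yields only $(1+a)(3-a^2)\,r \le 2(1-r)$, and at $r=1/3$ this would require $(1+a)(3-a^2)\le 4$; but the left side actually reaches $\approx 4.27$ near $a\approx 0.72$, so the linear Bohr sum must be bounded more sharply before the argument can close.

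The plan is therefore first to establish the sharp ``Blaschke-type'' estimate
\[
\sum_{k\ge 1}|a_k|\,r^k \;\le\; \frac{(1-a^2)\,r}{1-ar},\qquad r\le 1/3,
\]
via the Schur parametrization $f(z) = (a+zf_1(z))/(1+az\,f_1(z))$ with $|f_1|\le 1$. Expanding the geometric series gives $f(z)-a = (1-a^2)\sum_{m\ge 1}(-a)^{m-1}z^m f_1(z)^m$; extracting the coefficient of $z^k$, bounding by the triangle inequality, and interchanging the summations in $k$ and $m$ yields $\sum_{k\ge 1}|a_k|r^k \le (1-a^2)\sum_{m\ge 1}a^{m-1}r^m\, B_{f_1^m}(r)$, where $B_g(r):=\sum_j |g_j|r^j$ denotes the Bohr sum of $g$. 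Applying the classical Bohr inequality $B_{f_1^m}(r)\le 1$ (valid for $r\le 1/3$ since $|f_1^m|\le 1$) and summing the remaining geometric series in $m$ produces the displayed bound; this sharpening is the main real step.

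Once this is in hand, the pointwise estimate $|a_k|^2\le (1-a^2)|a_k|$ gives
\[
\sum_{k\ge 1}\Bigl(|a_k|+\tfrac{1}{2}|a_k|^2\Bigr)r^k \;\le\; \frac{3-a^2}{2}\sum_{k\ge 1}|a_k|r^k \;\le\; \frac{(1-a^2)(3-a^2)}{2}\cdot\frac{r}{1-ar}.
\]
At $r=1/3$ the left-hand side of \eqref{KayPon8-eq6} is then at most $a + (1-a^2)(3-a^2)/[2(3-a)]$; subtracting $1$ and dividing by $1-a>0$ reduces the inequality to $(1+a)(3-a^2)\le 2(3-a)$ on $[0,1]$, i.e.\ $a^3+a^2-5a+3\ge 0$. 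This polynomial factors cleanly as $(1-a)^2(a+3)$ and is therefore non-negative on $[0,1]$, with equality only at $a=1$; this proves \eqref{KayPon8-eq6}.

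For sharpness I would use the Blaschke automorphism $f(z)=(a-z)/(1-az)$ from \eqref{KayPon8-eq2} at $z=-r$, giving left-hand side
\[
h(r,a) \;=\; a + (1-a^2)\frac{r}{1-ar}+\tfrac{1}{2}(1-a^2)^2\frac{r}{1-a^2r}.
\]
Writing $h(r,a)-1 = (1-a)\bigl[\,-1 + (1+a)\tfrac{r}{1-ar} + \tfrac{1}{2}(1-a)(1+a)^2\tfrac{r}{1-a^2r}\,\bigr]$ and letting $a\to 1^-$, the bracket tends to $2r/(1-r)-1$, which is strictly positive exactly when $r>1/3$; so the radius cannot be increased. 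For the constant, replace $1/2$ by $c$ in $h$ and expand at $r=1/3$ in $\varepsilon=1-a$: a direct calculation gives $h = 1+(2c-1)\varepsilon^2 + O(\varepsilon^3)$, which exceeds $1$ for any $c>1/2$ as $\varepsilon\to 0^+$. The main obstacle throughout is the Schur-based sharpening of the Bohr bound on $\sum|a_k|r^k$; once that is granted, the subsequent polynomial inequality and the sharpness expansion are routine.
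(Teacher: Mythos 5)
Your proof is correct, and it follows a genuinely different route from the paper's. The paper imports two estimates from its companion paper \cite{KayPon1}: the two-case bound \eqref{KayPon8-eq7} on $\sum_{k\ge1}|a_k|r^k$ and the quadratic bound \eqref{KayPon8-eq9} on $\sum_{k\ge1}|a_k|^2r^k$, and then verifies the inequality at $r=1/3$ separately for $|a_0|\ge 1/3$ and $|a_0|<1/3$, omitting the sharpness details. You instead prove from scratch the single, case-free estimate $\sum_{k\ge1}|a_k|r^k\le(1-a^2)r/(1-ar)$ for all $a=|a_0|$ and $r\le 1/3$, by feeding the classical Bohr inequality for the powers $f_1^m$ into the Schur parametrization $f=(a+zf_1)/(1+\overline{a}zf_1)$; this bootstrap is valid (the interchange of the $k$- and $m$-sums is legitimate because all terms are nonnegative), it reproduces the paper's bound $A(r)$ without the hypothesis $|a_0|\ge r$, and it eliminates the case distinction. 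You then absorb the quadratic term with the elementary Wiener estimate $|a_k|^2\le(1-a^2)|a_k|$ instead of \eqref{KayPon8-eq9}, and the proof closes on the clean factorization $a^3+a^2-5a+3=(1-a)^2(a+3)\ge0$. Your sharpness discussion is also more complete than the paper's: the limit $a\to1^-$ pins down the radius $1/3$, and the expansion $h=1+(2c-1)\varepsilon^2+O(\varepsilon^3)$ at $r=1/3$ pins down the constant $1/2$, whereas the paper only asserts that sharpness ``follows similarly''. The paper's route is shorter if one grants the external lemmas; yours is self-contained, avoids the case split, and makes both extremality claims explicit. (Your opening observation that the naive bound $(1-a^2)r/(1-r)$ fails near $a\approx 0.72$ is a sanity check rather than a needed step, but it correctly identifies why the sharper linear bound is essential.)
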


\begin{thm}\label{KayPon8-Additional3}
Suppose that $f(z) = \sum_{k=0}^\infty a_k z^k$ is analytic in $\ID$ and $|f(z)| \leq 1$ in $\ID$. Then
$$\sum_{k=0}^\infty |a_k|r^k + |f(z)-a_0|^2 \leq 1 ~\mbox{ for  }~ r \leq \frac{1}{3}
$$ and the number  $1/3$  cannot be improved.
\end{thm}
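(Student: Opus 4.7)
The plan is to reduce the claimed inequality to Theorem~\ref{KayPon8-Additional2} by dominating $|f(z)-a_0|^2$ through a weighted Cauchy--Schwarz applied to the tail series $f(z)-a_0=\sum_{k\geq 1}a_k z^k$. Writing $a_k z^k=(a_k r^{k/2})(e^{ik\theta}r^{k/2})$ for $z=re^{i\theta}$ and applying Cauchy--Schwarz yields
\[
|f(z)-a_0|^2\leq\left(\sum_{k=1}^\infty |a_k|^2 r^k\right)\left(\sum_{k=1}^\infty r^k\right)=\frac{r}{1-r}\sum_{k=1}^\infty |a_k|^2 r^k.
\]
Since $r\mapsto r/(1-r)$ is increasing in $r$ and equals $1/2$ at $r=1/3$, for every $r\leq 1/3$ I obtain the pivotal estimate $|f(z)-a_0|^2\leq \frac{1}{2}\sum_{k\geq 1}|a_k|^2 r^k$.

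Adding the Bohr series to both sides gives
\[
\sum_{k=0}^\infty |a_k| r^k+|f(z)-a_0|^2\leq |a_0|+\sum_{k=1}^\infty\left(|a_k|+\frac{1}{2}|a_k|^2\right)r^k,
\]
and the right-hand side is $\leq 1$ for $r\leq 1/3$ by Theorem~\ref{KayPon8-Additional2}. So the whole argument collapses to a single Cauchy--Schwarz step followed by citation of the improved Bohr inequality already established in this section.

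The point I would examine carefully is the choice of weights in Cauchy--Schwarz. Using equal weights yields $|f(z)-a_0|^2\leq \bigl(r^2/(1-r^2)\bigr)\sum_{k\geq 1}|a_k|^2$, and Parseval then gives at most $(1-|a_0|^2)/8$ at $r=1/3$; added to the Bohr series this bound is not strong enough when $|a_0|$ is close to $1$, because the classical Bohr sum itself already saturates. The weights $r^{k/2}$ are exactly what pair the tail against the quadratic terms appearing in Theorem~\ref{KayPon8-Additional2}, and this is the one non-obvious ingredient. For sharpness of the constant $1/3$, I would use the extremal $f(z)=(a-z)/(1-az)$ from the proof of Theorem~\ref{KayPon8-th1}: for any $r>1/3$ the classical Bohr sum $\sum |a_k| r^k$ alone exceeds $1$ once $a$ is taken sufficiently close to $1$, and since $|f(z)-a_0|^2\geq 0$, the left-hand side of the present inequality also exceeds $1$, so $1/3$ cannot be enlarged.
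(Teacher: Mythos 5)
Your proof is correct, and it takes a genuinely different route from the paper's. The paper bounds $|f(z)-a_0|\leq\sum_{k\geq 1}|a_k|r^k$ and then runs the same two-case machinery used throughout Section 4: for $|a_0|\geq 1/3$ it uses $A(1/3)=\frac{1-|a_0|^2}{3-|a_0|}$ and verifies the algebraic identity $|a_0|+A(1/3)+A(1/3)^2=1-\frac{(1-|a_0|)^3(5+|a_0|)}{(3-|a_0|)^2}\leq 1$, and for $|a_0|<1/3$ it uses $B(1/3)$ and a crude numerical estimate. Your weighted Cauchy--Schwarz step
\[
|f(z)-a_0|^2\leq\Bigl(\sum_{k=1}^{\infty}|a_k|^2r^k\Bigr)\Bigl(\sum_{k=1}^{\infty}r^k\Bigr)\leq\frac{1}{2}\sum_{k=1}^{\infty}|a_k|^2r^k\quad(r\leq 1/3)
\]
collapses the theorem into a corollary of Theorem~\ref{KayPon8-Additional2}, with no case split and no new algebra; your diagnosis that the unweighted Cauchy--Schwarz (giving $(1-|a_0|^2)/8$ via Parseval) fails near $|a_0|=1$ is also accurate, since $|a_0|+A(1/3)=1-\frac{2(1-|a_0|)^2}{3-|a_0|}$ leaves only quadratic room in $1-|a_0|$. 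What your route buys is brevity and a clean logical reduction; what it costs is that the dependency chain still passes through the $A$/$B$ estimates of \eqref{KayPon8-eq7} and the coefficient inequality \eqref{KayPon8-eq9}, only now hidden inside Theorem~\ref{KayPon8-Additional2} (there is no circularity, since that theorem's proof does not use the present one). The paper's direct computation is self-contained modulo \eqref{KayPon8-eq7} and, as a by-product, exhibits the explicit nonnegative deficiency term. Your sharpness argument -- that the left-hand side dominates the classical Bohr sum, whose radius $1/3$ is already sharp for the M\"obius functions -- is exactly what the paper's terse ``the sharpness follows similarly'' must mean, and you actually spell it out.
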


Finally, we also prove the following sharp inequality.

\begin{thm}\label{KayPon8-AdditionalNew}
Suppose that $f(z) = \sum_{k=0}^\infty a_k z^k$ is analytic in $\ID$ and $|f(z)| \leq 1$ in $\ID$. Then
$$|f(z)|^2+\sum_{k=1}^\infty |a_k|^2r^{2k} \leq 1 ~\mbox{ for  }~ r \leq \sqrt{\frac{11}{27}}
$$ and this number  cannot be improved.
\end{thm}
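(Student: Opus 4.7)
I would split the proof into one sharp auxiliary tail estimate and an elementary single-variable optimization. By composing $f$ with rotations of domain and range we may assume $a := a_0 \in [0,1)$ is real, and write $|z| = r$. From Schwarz--Pick comes the Lindel\"{o}f bound $|f(z)| \leq (r+a)/(1+ar)$. The plan is to pair this with the sharp coefficient estimate
\[
\sum_{k=1}^{\infty}|a_k|^2 r^{2k} \leq \frac{(1-a^2)^2 r^2}{1-a^2 r^2}, \qquad (*)
\]
and then verify
\[
\Big(\frac{r+a}{1+ar}\Big)^{\!2} + \frac{(1-a^2)^2 r^2}{1-a^2 r^2} \leq 1 \qquad \text{for all } a\in[0,1) \text{ and } r \leq \sqrt{11/27}.
\]

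To prove $(*)$, I would apply Schwarz to $g(z) := (f(z)-a)/(1-a\,f(z))$, which vanishes at $0$ and maps $\ID$ into $\overline{\ID}$, so $g(z) = z H(z)$ with $|H|\leq 1$ analytic on $\ID$. Inverting this M\"{o}bius relation yields $f(z) - a = (1-a^2)\, z\, \Psi(z)$ with $\Psi(z) := H(z)/(1 + a z H(z))$, and Parseval on the circle $|z|=r$ gives $\sum_{k\geq 1}|a_k|^2 r^{2k} = (1-a^2)^2 r^2 \cdot \tfrac{1}{2\pi}\!\int_0^{2\pi}\!|\Psi|^2\, d\theta$. Since $H = \Psi/(1-az\Psi)$ satisfies $|H|\leq 1$, completing the square in $|\Psi|^2 \leq |1-az\Psi|^2$ produces the pointwise disk inclusion
\[
\Big|\,\Psi(z) + \frac{a\bar z}{1-a^2 r^2}\,\Big| \leq \frac{1}{1-a^2 r^2}, \qquad |z|=r.
\]
Squaring and integrating in $\theta$, the cross term $\tfrac{2a}{1-a^2 r^2}\operatorname{Re}\int_0^{2\pi}z\,\Psi(z)\,d\theta$ vanishes because $z\Psi(z)$ is analytic with zero constant term, while $\int_0^{2\pi}|a\bar z/(1-a^2 r^2)|^2\,d\theta = 2\pi a^2 r^2/(1-a^2 r^2)^2$ cancels with part of the right-hand side, leaving exactly $\tfrac{1}{2\pi}\int|\Psi|^2\,d\theta \leq 1/(1-a^2 r^2)$, which is $(*)$.

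For the final step, clearing denominators and using the identities $(1+ar)^2 - (r+a)^2 = (1-a^2)(1-r^2)$ and $1-a^2 r^2 = (1-ar)(1+ar)$, the required inequality simplifies to
\[
\Phi(a) := 1 - ar - 2r^2 + a^2 r^2 + a^3 r^3 \geq 0.
\]
Since $\Phi'(a) = r(ar+1)(3ar-1)$, the unique interior critical point is $ar = 1/3$, at which $\Phi_{\min} = 22/27 - 2r^2 \geq 0 \iff r \leq \sqrt{11/27}$; the endpoints $\Phi(0) = 1-2r^2$ and $\Phi(1) = (1-r)^2(1+r)$ are non-negative throughout this range. Sharpness follows by taking $f(z) = (a-z)/(1-az)$ with $a = \sqrt{3/11}$ and $z = -\sqrt{11/27}$ (so $ar = 1/3$): direct calculation gives $|f(-r)|^2 = 25/33$ and $\sum_{k\geq 1}|a_k|^2 r^{2k} = 8/33$, which sum to $1$ exactly. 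The main difficulty I anticipate is $(*)$: the naive Wiener bound $|a_k|\leq 1-a^2$ only delivers the weaker Bohr--Rogosinski radius $(\sqrt 5 - 1)/2 < \sqrt{11/27}$, so the M\"{o}bius disk inclusion for $\Psi$ combined with the vanishing of the cross-term integral is what produces the exact constant $\sqrt{11/27}$.
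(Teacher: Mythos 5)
Your proof is correct and follows essentially the same route as the paper: it combines the Schwarz--Pick bound $|f(z)|\le (r+|a_0|)/(1+|a_0|r)$ with the estimate $\sum_{k\ge 1}|a_k|^2r^{2k}\le r^2(1-|a_0|^2)^2/(1-|a_0|^2r^2)$ and then optimizes over $|a_0|$, using the same extremal M\"obius function with $a=\sqrt{3/11}$. The only differences are that you prove the coefficient inequality $(*)$ from scratch (the paper cites it as Lemma~1 of the authors' earlier work, their inequality \eqref{KayPon8-eq9} with $r$ replaced by $r^2$) and that you carry out the final one-variable minimization explicitly where the paper merely asserts it is a straightforward calculation.
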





\section{Proofs of Theorems \ref{KayPon8-Additional4}, \ref{KayPon8-Additional2}, \ref{KayPon8-Additional3} and \ref{KayPon8-AdditionalNew} } \label{sec2-BR2b}
For the proof of Theorem \ref{KayPon8-Additional4}, we need the following lemma, especially when $0 < r \leq 1/2$.

\begin{lem}\label{KP2-lem2}
 Let $|b_0|<1$ and $0 < r \leq 1/\sqrt{2}$. If $g(z)=\sum_{k=0}^{\infty} b_kz^k$ is analytic and satisfies the inequality $|g(z)| <  1$ in $\ID$, then
the following sharp inequality holds:
\begin{equation}\label{KP2-eq3}
\sum_{k=1}^\infty k |b_k|^2r^{2k} \leq r^2\frac{(1-|b_0|^2)^2}{(1-|b_0|^2r^2)^2}.
\end{equation}
\end{lem}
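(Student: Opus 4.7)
The plan is to recognize the left-hand side of \eqref{KP2-eq3} as an area counted with multiplicity and then compare it against the area of a model image arising from a disk automorphism. By the Parseval identity applied to $g'$,
\[
\sum_{k=1}^\infty k|b_k|^2 r^{2k}=\frac{1}{\pi}\iint_{|z|<r}|g'(z)|^2\,dA(z),
\]
and a direct integration using $|\varphi'(u)|^2=(1-|b_0|^2)^2/|1+\overline{b_0}u|^4$, where $\varphi(u):=(u+b_0)/(1+\overline{b_0}u)$ is the disk automorphism with $\varphi(0)=b_0$, yields
\[
\iint_{|u|<r}|\varphi'(u)|^2\,dA(u)=\pi r^2\frac{(1-|b_0|^2)^2}{(1-|b_0|^2r^2)^2},
\]
which is the Euclidean area of the pseudohyperbolic disk of radius $r$ centered at $b_0$. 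The desired inequality is thus equivalent to the area-subordination statement $\iint_{|z|<r}|g'|^2\,dA\leq \iint_{|u|<r}|\varphi'|^2\,dA$.

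To exploit the hypothesis $|g|<1$ I would invoke the Schwarz--Pick factorization $g=\varphi\circ h$, where $h(z):=(g(z)-b_0)/(1-\overline{b_0}g(z))$ is a Schwarz function with $|h(z)|\leq|z|$ and $h(0)=0$. From $g'=\varphi'(h)h'$ and the area change of variable with multiplicity (using $h(\{|z|<r\})\subset\{|u|<r\}$), the inequality becomes
\[
\iint_{|u|<r}|\varphi'(u)|^2\bigl(n_h(u)-1\bigr)\,dA(u)\leq 0,\qquad n_h(u):=\#\{z:|z|<r,\ h(z)=u\}.
\]

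The key analytic input is the following Schwarz area estimate: for every Schwarz function $h(z)=\sum_{k\geq 1}c_kz^k$ and every $r\leq 1/\sqrt{2}$,
\[
\iint_{|z|<r}|h'|^2\,dA=\pi\sum_{k\geq 1}k|c_k|^2r^{2k}\leq\pi r^2.
\]
Parseval together with $|h(z)|\leq|z|$ gives $\sum_{k\geq 1}|c_k|^2\leq 1$, and the elementary observation $k\,r^{2k}\leq r^2$ holds for every $k\geq 1$ when $r\leq 1/\sqrt{2}$: the binding case $k=2$ forces the constant $2^{-1/2}$, while for $k\geq 3$ the function $k^{-1/(2k-2)}$ is monotone increasing in $k$, making the constraint milder. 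Combining these two inputs yields the unweighted mass bound $\iint_{|u|<r}n_h\,dA\leq\pi r^2$.

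The principal obstacle --- and the step I expect to require the most care --- is promoting this unweighted bound to the $|\varphi'|^2$-weighted inequality displayed above. I would proceed by writing $|g'(z)|^2=(1-|b_0|^2)^2|F'(z)|^2$ with $F(z):=h(z)/(1+\overline{b_0}h(z))=(\psi\circ h)(z)$, where the M\"obius map $\psi(w):=w/(1+\overline{b_0}w)$ carries $\ID$ univalently onto the Euclidean disk centered at $-\overline{b_0}/(1-|b_0|^2)$ of radius $1/(1-|b_0|^2)$. Since $\psi$ is univalent, $F$ inherits the multiplicity structure of $h$, and $F(\{|z|<r\})$ is contained in $\psi(\{|u|<r\})$, a Euclidean disk of area exactly $\pi r^2/(1-|b_0|^2 r^2)^2$. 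The task then reduces to a sharp Schwarz-type area-subordination estimate that compares this image area against the multiplicity-weighted integral of $|\psi'|^2$ against $n_h$; this is the substantive content of the argument and is what pins down the sharp radius $r=1/\sqrt{2}$. Sharpness is witnessed by $h(z)=z^2$, $b_0=0$, $r=1/\sqrt{2}$, which saturates the elementary bound $k\,r^{2k}\leq r^2$ at $k=2$ and hence \eqref{KP2-eq3}.
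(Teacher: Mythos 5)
There is a genuine gap: your argument stops exactly where the lemma begins. After the (correct) reformulation via Parseval and the factorization $g=\varphi\circ h$, the inequality to be proved is
$$\iint_{|u|<r}|\varphi'(u)|^2\bigl(n_h(u)-1\bigr)\,dA(u)\leq 0,$$
and you explicitly defer this, calling it ``the substantive content of the argument.'' But this weighted statement \emph{is} the lemma, merely rewritten; nothing you establish implies it. The unweighted bound $\iint_{|u|<r}n_h\,dA\leq\pi r^2$ that you do prove (Parseval plus $kr^{2k}\leq r^2$ for $r\leq 1/\sqrt2$) cannot be promoted to the weighted one by any soft argument: the weight $|\varphi'(u)|^2=(1-|b_0|^2)^2/|1+\overline{b_0}u|^4$ is not radial, and the excess mass $n_h\geq 2$ may well concentrate where the weight is largest. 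Indeed, combining your unweighted bound with $\sup_{|u|<r}|\varphi'(u)|^2$ only yields $\sum_{k\ge1}k|b_k|^2r^{2k}\leq r^2(1-|b_0|^2)^2/(1-|b_0|r)^4$, which is strictly weaker than \eqref{KP2-eq3} whenever $b_0\neq 0$. Your sharpness example ($h(z)=z^2$, $b_0=0$) likewise only tests the case where the weight is constant, i.e.\ the part you actually proved.

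The paper closes this gap with one classical tool that your outline never invokes: Goluzin's weighted form of Rogosinski's subordination inequality. Since $g\prec\phi$ where $\phi$ is the disk automorphism with $\phi(0)=b_0$, whose Taylor coefficients satisfy $|\phi_k|^2=(1-|b_0|^2)^2|b_0|^{2(k-1)}$ for $k\geq 1$, Goluzin's inequality gives $\sum_{k\ge1}\lambda_k|b_k|^2\leq\sum_{k\ge1}\lambda_k|\phi_k|^2$ for any non-increasing sequence $\lambda_k\geq 0$; taking $\lambda_k=kr^{2k}$ (non-increasing precisely because $r\leq 1/\sqrt2$ makes the $k=1\to 2$ step non-increasing, and later steps are milder) and summing the geometric-type series yields \eqref{KP2-eq3} immediately. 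If you want to complete your area-theoretic route, you would in effect have to reprove Goluzin's inequality in integrated form; as written, the proposal reduces the lemma to an equivalent unproved statement rather than proving it.
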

\begin{proof}
Let $b_0=a$. Then, it is easy to see that the condition on $g$ can be rewritten in terms of subordination as
\begin{equation}\label{KP2-eq1}
g(z) = \sum_{k=0}^{\infty} b_kz^k \prec \phi (z)=a-(1-|a|^2)\sum_{k=1}^{\infty}(\overline{a})^{k-1} z^{k}, \quad z\in\ID,
\end{equation}
where $\prec$ denotes the usual subordination (see \cite{DurenUniv-83-8,Golu51}).
Note that $\phi$ is analytic in $\ID$ and $|\phi (z)|<1$ for $z\in\ID$. The subordination relation \eqref{KP2-eq1} gives
$$\sum_{k=1}^\infty k|b_k|^2r^{2k} \leq (1-|a|^2)^2\sum_{k=1}^{\infty}k|a|^{2(k-1)} r^{2k}
 =r^2 \frac{(1-|a|^2)^2}{(1-|a|^2r^2)^2}
$$
from which we arrive at the inequality \eqref{KP2-eq3} which proves Lemma \ref{KP2-lem2}. For $0 < r \leq 1/\sqrt{2}$,  it is important to note
here that the sequence $\{kr^{2k}\}$ is non-increasing for all $k \ge 1$ so that we were able to apply the classical
Goluzin's inequality \cite{Golu51} (see also \cite[Theorem 6.3]{DurenUniv-83-8})  which extends the classical Rogosinski inequality.
\end{proof}


\bpf[Proof of Theorem \ref{KayPon8-Additional4}]
Since the left hand side of (\ref{Eq_Th3}) is an increasing function of $r$, it is enough to prove it for $r=1/3$.
Therefore, we set $r=1/3$. Moreover, the present authors in the proof of Theorem 1 in \cite{KayPon1} proved the following inequalities:
\be\label{KayPon8-eq7}
\sum_{k=1}^\infty |a_k|r^k \leq
\left \{ \begin{array}{lr} \ds A(r):=r\frac{1-|a_0|^2}{1-r|a_0|} & \mbox{ for $|a_0| \ge r$} \\[4mm]
\ds B(r):=r\frac{\sqrt{1-|a_0|^2}}{\sqrt{1-r^2}} &\mbox{ for $|a_0| < r$}.
\end{array} \right .
\ee
Note that $|a_k|\leq1-|a_0|^2$ for $k\geq 1$ and, from the definition of $S_r$, we see that
\beq
\frac{S_r}{\pi} &=& \frac{1}{\pi}\int\int_{|z|<r}|f'(z)|^{2}\,dxdy = \sum_{k=1}^\infty k|a_k|^2r^{2k} \nonumber\\
&\leq&  (1-|a_0|^2)^2 \sum_{k=1}^\infty kr^{2k}=(1-|a_0|^2)^2\frac{r^2}{(1-r^2)^2}. \label{KayPon8-eq7a}
\eeq
At first we consider the case $|a_0| \ge r= 1/3$. In this case, using \eqref{KayPon8-eq7} and \eqref{KayPon8-eq7a}, 
we have
\beqq
B_1(r)= |a_0|+\sum_{k=1}^\infty |a_k|r^k +\frac{16}{9\pi} S_r &\leq & |a_0|+A(1/3) +\frac{16}{9\pi} S_{1/3}\\
& \leq &|a_0|+ \frac{1-|a_0|^2}{3-|a_0|}+\frac{(1-|a_0|^2)^2}{4}\\
& =& 1-\frac{(1-|a_0|)^3 (5 - |a_0|^2)}{4 (3 - |a_0|)}\leq 1.
\eeqq
Next we consider the case $|a_0|<r=1/3$. Again,  using \eqref{KayPon8-eq7} and  \eqref{KayPon8-eq7a}, 
we deduce that
\beqq
B_1(r)=\sum_{k=0}^\infty |a_k|r^k +\frac{16}{9\pi} S_r
&\leq & |a_0|+B(1/3) +\frac{16}{9\pi} S_{1/3}\\
& \leq& |a_0|+\frac{\sqrt{1-|a_0|^2}}{\sqrt{8}}+\frac{(1-|a_0|^2)^2}{4}\\
& \leq& \frac{1}{3}+\frac{1}{\sqrt{8}}+\frac{1}{4}<1  \quad \mbox{ (since $|a_0|< 1/3$)}
\eeqq
and the desired inequality (\ref{Eq_Th3}) follows.

To prove that the constant $16/(9\pi)$ is sharp, we consider the function $f$ given by \eqref{KayPon8-eq2}.
For this function, straightforward calculations show that
$$\sum_{k=0}^\infty |a_k|r^k +\frac{\lambda}{\pi} S_r=a+r\frac{1-a^2}{1-ra}+\lambda(1-a^2)^2\frac{r^2}{(1-a^2r^2)^2}.
$$
In the case $r=1/3$ the last expression becomes
$$ a+\frac{1-a^2}{3-a}+9\lambda\frac{(1-a^2)^2}{(9-a^2)^2}=1-\frac{2 (1-a)^3 (19 + 12 a + a^2)}{(a^2-9)^2}+(9\lambda-16)\frac{(1-a^2)^2}{(9-a^2)^2}
$$
which is obviously bigger than $1$ in case $\lambda>16/9$ and $a \to 1$.  The proof of the first part of Theorem \ref{KayPon8-Additional4} is complete.

Let us now verify the inequality  (\ref{Eq2_Th3}). To do it we will use the method presented above and Lemma \ref{KP2-lem2} for $r \leq 1/2$.
From Lemma \ref{KP2-lem2}, it follows that
\be\label{KayPon8-eq8}
\frac{S_r}{\pi} \leq (1-|a_0|^2)^2\frac{r^2}{(1-|a_0|^2r^2)^2}, \quad r \leq 1/2.
\ee
Let $r \leq 1/2$ and we first consider the case $|a_0| \ge 1/2$. Then, using \eqref{KayPon8-eq7} and \eqref{KayPon8-eq8},
we obtain that
\beqq
B_2(r)=|a_0|^2+\sum_{k=1}^\infty |a_k|r^k +\frac{9}{8\pi} S_r
&\leq& |a_0|^2+A(1/2) +\frac{9}{8\pi} S_{1/2} \\
&\leq&  |a_0|^2+ \frac{1-|a_0|^2}{2-|a_0|}+\frac{4(1-|a_0|^2)^2}{(4-|a_0|^2)^2}\\
&=& 1-\frac{(1-|a_0|)^3(1+|a_0|) (7+6|a_0| +2 |a_0|^2)}{2 (4 - |a_0|^2)^2}\leq 1
\eeqq
Now we consider the case $|a_0|<1/2$. In this case we have
\beqq
B_2(r) 
&\leq& |a_0|^2+B(1/2) +\frac{9}{8\pi} S_{1/2} \\
&\leq & |a_0|^2+\frac{\sqrt{1-|a_0|^2}}{\sqrt{3}}+\frac{4(1-|a_0|^2)^2}{(4-|a_0|^2)^2} \\
&\leq &\frac{1}{\sqrt{3}}+|a_0|^2+\frac{4(1-|a_0|^2)^2}{(4-|a_0|^2)^2} \\
&\leq& \frac{1}{\sqrt{3}}+\frac{41}{100}-\frac{(1-4|a_0|^2)(256-104 |a_0|^2+25 |a_0|^4)}{100(|a_0|^2-4)^2}
\eeqq
which is less than $1$.
The sharpness of the constant $9/8$ can be established as in the previous case and thus, we omit the details. The proof of the theorem
is complete.
\epf


\bpf[Proof of Theorem \ref{KayPon8-Additional2}]
Let $A(r)$ and $B(r)$ be defined as in \eqref{KayPon8-eq7}. Furthermore, the present authors in \cite{KayPon1} demonstrated the
following inequality for the coefficients of $f$:
\be\label{KayPon8-eq9}
\sum_{k=1}^\infty |a_k|^2r^k \leq \frac{r(1-|a_0|^2)^2}{1-|a_0|^2r}.
\ee
As remarked in the proof of earlier theorems, it suffices to prove the inequality \eqref{KayPon8-eq6} for $r=1/3$ and thus, we may
set $r=1/3$ in the proof below. At first we consider the case $|a_0| \ge 1/3$ so that
\beqq
\sum_{k=0}^\infty |a_k|r^k +\frac{1}{2}\sum_{k=1}^\infty |a_k|^2r^k
&\leq& |a_0|+A(1/3) + \frac{(1-|a_0|^2)^2}{6-2|a_0|^2}\\
& =&|a_0|+ \frac{1-|a_0|^2}{3-|a_0|}+ \frac{(1-|a_0|^2)^2}{6-2|a_0|^2} \\
&=& 1-\frac{(1-|a_0|)^2}{2} \leq 1\quad \mbox{ (since $|a_0|\leq 1$).}
\eeqq
Similarly, for the case $|a_0|<1/3$, we have
\beqq
\sum_{k=0}^\infty |a_k|r^k +\frac{1}{2}\sum_{k=1}^\infty |a_k|^2r^k
&\leq & |a_0|+B(1/3) +\frac{(1-|a_0|^2)^2}{6-2|a_0|^2} \\
&\leq & |a_0|+\frac{\sqrt{1-|a_0|^2}}{\sqrt{8}}+\frac{(1-|a_0|^2)^2}{6-2|a_0|^2}\\
& \leq & \frac{1}{3}+\frac{1}{\sqrt{8}}+\frac{1}{6}\\
&<&1 \quad \mbox{ (since $|a_0|< 1/3$)}
\eeqq
which concludes the proof of Theorem \ref{KayPon8-Additional2} since the proof of sharpness follows similarly.
\epf

\bpf[Proof of Theorem \ref{KayPon8-Additional3}] Let $A(r)$ and $B(r)$ be defined as in \eqref{KayPon8-eq7}. Also, we may let $r=1/3$.
Accordingly, we first consider the case $|a_0| \ge 1/3$ so that
\beqq
\sum_{k=0}^\infty |a_k|r^k +|f(z)-a_0|^2 &\leq& |a_0|+A(1/3) + A(1/3)^2 \\
&=& |a_0|+ \frac{1-|a_0|^2}{3-|a_0|}+ \frac{(1-|a_0|^2)^2}{(3-|a_0|)^2}\\
& =&  1-\frac{(1-|a_0|)^3(5+|a_0|)}{(3-|a_0|)^2} \leq 1\quad \mbox{ (since $|a_0|\leq 1$).}
\eeqq
Next, we consider the case $|a_0|<1/3$ so that
\beqq
\sum_{k=0}^\infty |a_k|r^k +|f(z)-a_0|^2 &\leq & |a_0|+ B(1/3) +B(1/3)^2\\
& =& |a_0|+\frac{\sqrt{1-|a_0|^2}}{\sqrt{8}}+\frac{1-|a_0|^2}{8} \\
&\leq& \frac{1}{3}+\frac{1}{\sqrt{8}}+\frac{1}{8}<1.
\eeqq
This concludes the proof of Theorem \ref{KayPon8-Additional2} and the sharpness follows similarly.
\epf

\bpf[Proof of Theorem \ref{KayPon8-AdditionalNew}] Using \eqref{KayPon8-eq9} (see \cite[Lemma 1]{KayPon1})
and the classical inequality for $|f(z)|$, we have
$$|f(z)|^2+\sum_{k=1}^\infty |a_k|^2r^{2k} \leq \left(\frac{r+|a_0|}{1+r|a_0|}\right)^2 + \frac{r^2(1-|a_0|^2)^2}{1-|a_0|^2r^2}.
$$
For $r=\sqrt{11/27}$, the last expression on the right gives
$$ 1-\frac{3(1-|a_0|^2)}{(9+\sqrt{33}|a_0|)^2(27-11|a_0|^2)} (135 - 66 \sqrt{33} |a_0| + 66 \sqrt{33} |a_0|^3 + 121 |a_0|^4).
$$
and straightforward calculations show that this expression is less than or equal to $1$ for all $|a_0| \leq 1$.
The example
$$ f(z) =\frac{z+a}{1+az}
$$
with $a=\sqrt{3/11}$ shows that $r=\sqrt{11/27}$ is sharp. This completes the proof.
\epf

\section{Bohr-Rogosinski's radius for a class of subordinations}\label{KayPon8-sec3}

We may generalize Bohr-Rogosinski's radius, defined in Section \ref{KayPon8-sec1} for
mappings from $\mathbb{D}$ to itself, by writing Bohr-Rogosinski inequality in the equivalent form
$$\sum_{k=1}^\infty |b_k|r^k\le 1-|g(z)|=\dist (g(z), \partial \ID).
$$
Observe that the number $1-|g(z)|$ is the distance from the point
$g(z)$ to the boundary $\partial \mathbb{D}$ of the unit disk $\mathbb{D}.$
Using this ``distance form" formulation of the Bohr-Rogosinski inequality, the notion of  the Bohr-Rogosinski radius
can be generalized to the  class of functions $f$ analytic in $\mathbb{D}$ which take values in a given domain $\Omega$.
For our formulation, we shall use the notion of subordination.

As in the case of Bohr phenomenon \cite{Abu}, for a given $f$, it is natural to introduce $S(f)=\{g:\, g\prec f\}$ and $\Omega =f(\ID)$.
We say that the family $S(f)$ has a \textit{Bohr-Rogosinski phenomenon}
if there exists an $r_{f}$, $0<r_{f}\leq 1$, such that whenever $g(z)=\sum_{k=0}^{\infty} b_kz^k\in S(f)$, we have
\begin{equation}\label{sub}
|g(z)| +\sum_{k=1}^{\infty} |b_k|r^k \leq |f(0)|+ \dist (f(0),\partial \Omega )
\end{equation}
for $|z|=r<r_f.$
We observe that if  $f(z)=(a_0-z)/(1-\overline{a_0}z)$ with $|a_0|<1$, and $\Omega =\ID $, then we have
$$\dist (f(0),\partial \Omega)=1-|f(0)|,
$$
which  means that \eqref{sub} holds with $r_f=\sqrt{5}-2$, according to Theorem \ref{KayPon8-th1}. In view of this
observation, we say that the family $S(f)$ satisfies the \textit{classical}
Bohr-Rogosinski  phenomenon if \eqref{sub} holds for $|z|=r<\sqrt{5}-2$ with $1-|g(z)|$ in place of $\dist (g(z),\partial f(\ID))$.
Hence the distance form allows us to extend Bohr-Rogosinski's theorem to a variety of distances provided the Bohr-Rogosinski phenomenon exists. 

\bthm\label{KayPon8-th3}
If $f,g$ are analytic in $\ID$ such that $f$ is univalent in $\ID$ and $g\in S(f)$, then inequality \eqref{sub}
 holds with $r_f=5-2\sqrt{6} \approx 0.101021 $. The sharpness of $r_f$ is shown by the Koebe function $f(z)=z/(1-z)^2.$
\ethm
\bpf
Let $g(z)=\sum_{k=0}^{\infty} b_kz^k\prec f(z)$, where $f$ is a univalent mapping of $\ID$ onto a simply connected domain $\Omega =f(\ID)$.
Then it is well known that (see, for instance, \cite{DurenUniv-83-8,Golu51}) 
for all $z\in \ID$ and $k\geq 1$,
\be\label{eq1-subtheo}
\frac{1}{4}|f'(z)|(1-|z|^2)\leq \dist (f(z),\partial \Omega )\leq |f'(z)|(1-|z|^2) ~\mbox{ and }~ |b_k| \leq k |f'(0)|.
\ee
It follows that $|b_k| \leq 4k \dist (f(0),\partial \Omega )=4k \dist (g(0),\partial \Omega ),$ for $k\geq 1$, and thus
\be\label{KayPon8-eq4}
\sum_{k=1}^{\infty} |b_k|r^k \leq 4\dist (f(0),\partial \Omega )  \sum_{k=1}^{\infty} kr^k = \dist (f(0),\partial \Omega )  \frac{4r}{(1-r)^2}.
\ee
Moreover, because $g\prec f$, it follows that
\be \label{KayPon8-eq5}
|g(z)-g(0)|\leq |a_1|\frac{r}{(1-r)^2} \leq \dist (f(0),\partial \Omega )  \frac{4r}{(1-r)^2}
\ee
so that (since $g(0)=f(0)$)
$$|g(z)|\leq |f(0)|+\dist (f(0),\partial \Omega )  \frac{4r}{(1-r)^2}.
$$
By \eqref{KayPon8-eq4} and \eqref{KayPon8-eq5}, we deduce that
$$|g(z)| +\sum_{k=1}^{\infty} |b_k|r^k \leq |f(0)|+ \dist (f(0),\partial \Omega )  \frac{8r}{(1-r)^2}
\leq |f(0)|+ \dist (f(0),\partial \Omega )
$$
provided $8 r\leq (1-r)^2 $. This gives the condition $r\leq 5-2\sqrt{6}.$
When $f(z)=z/(1-z)^{2}$, we obtain $\dist (f(0),\partial \Omega )=1/4$ and a simple calculation gives the sharpness.
\epf

In the case of univalent convex function $f$ with $g(z)=\sum_{k=0}^{\infty} b_kz^k\prec f(z)$, the equation \eqref{eq1-subtheo} takes that form
(see, for instance, \cite{DurenUniv-83-8,Golu51}),
$$
\frac{1}{2}|f'(z)|(1-|z|^2)\leq \dist (f(z),\partial \Omega )\leq |f'(z)|(1-|z|^2) ~\mbox{ and }~ |b_k| \leq |f'(0)| ~\mbox{ for $k\geq1$}
$$
and thus, it is easy to see that Theorem \ref{KayPon8-th4} takes the following form. Note that when
$f(z)=z/(1-z)$, we have  $\dist (f(0),\partial \Omega )=1/2$.

\bthm\label{KayPon8-th4}
If $f,g$ are analytic in $\ID$ such that $f$ is convex (univalent)  in $\ID$ and $g\in S(f)$, then inequality \eqref{sub}
holds with $r_f=1/5 $. The sharpness of $r_f$ is shown by the convex function $f(z)=z/(1-z).$
\ethm

\subsection*{Acknowledgements}
The research of the first author was supported by Russian foundation for basic research, Proj. 17-01-00282, and the research of the second author was supported
by the project RUS/RFBR/P-163 under Department of Science \& Technology (India).
The second author is currently on leave from the IIT Madras.

\end{document}